\theoremstyle{remark}{
\newtheorem{Def}{{\rm Definition}}

\newtheorem{Rem}{{\rm Remark}}

}
\newtheorem{Prop}{Proposition}
\newtheorem{Thm}{Theorem}
\newtheorem{MThm}{Main Theorem}
\begin{document}
\title[The images of special generic maps of several classes]{The images of special generic maps of several classes}
\author{Naoki Kitazawa}
\keywords{Singularities of differentiable maps. Fold (special generic) maps. Compact smooth submanifolds. Homology groups. Cohomology rings. \\
\indent {\it \textup{2020} Mathematics Subject Classification}: Primary ~57R45. Secondary ~57R19.}
\address{Institute of Mathematics for Industry, Kyushu University, 744 Motooka, Nishi-ku Fukuoka 819-0395, Japan\\
 TEL (Office): +81-92-802-4402 \\
 FAX (Office): +81-92-802-4405 \\
}
\email{n-kitazawa@imi.kyushu-u.ac.jp}
\urladdr{https://naokikitazawa.github.io/NaokiKitazawa.html}
\maketitle
\begin{abstract}
The class of {\it special generic} maps contains Morse functions with exactly two singular points, characterizing spheres topologically which are not $4$-dimensional and the $4$-dimensional unit sphere.
This class is for higher dimensional versions of such functions. Canonical projections of unit spheres are simplest examples and suitable manifolds diffeomorphic to ones represented as connected sums of products of spheres admit such maps.
They are known to restrict the topologies and the differentiable structures of the manifolds strongly.

Our paper focuses on images of special generic maps on closed manifolds. They are smoothly immersed compact manifolds whose dimensions are same as those of the targets. Some studies imply that they have much information on  (co)homology groups and cohomology rings for example. We present new construction and explicit examples of special generic maps from the images. Our paper is also essentially on construction and explicit algebraic topological and differential topological studies of immersed or embedded compact manifolds.
\end{abstract}
\section{Introduction.}
\label{sec:1}
{\it Special generic} maps are higher dimensiojnal versions of Morse functions with exactly two singular points. As Reeb's theorem shows, they characterize spheres topologically except $4$-dimensional cases and $4$-dimensional manifolds diffeomorphic to the unit sphere. These maps are attractive in the theory of algebraic topology and differential topology of manifolds. 

${\mathbb{R}}^k$ denotes the $k$-dimensional Euclidean space. This is regarded as a smooth manifold in the canonical way. This is also a Riemannian manifold endowed with the standard Euclidean metric. $||x|| \geq 0$ stands for the distance between $x$ and the origin $0$ under the previous metric.
$S^{k}\ {\rm (}D^k{\rm )}:=\{x \in {\mathbb{R}}^{k+1}\ {\rm (}resp. \  {\mathbb{R}}^{k}{\rm )} \mid ||x||=1\ {\rm (}resp. \ ||x|| \leq 1{\rm )}\}$ denotes the $k$-dimensional unit sphere (unit disk).

Hereafter, for a topoogical space $X$ homeomorphic to a CW complex, we can define the dimension $\dim X$ uniquely. (Topological) manifolds are such topological spaces. Smooth manifolds are well-known to be regarded as polyhedra in a natural way and regarded as PL manifolds. 
  
Throughout the present paper, a {\it singular point} $p \in X$ of a differentiable map $c:X \rightarrow Y$ is a point at which the rank of the differential ${dc}_p$ is smaller than $\min \{\dim X,\dim Y\}$. The {\it singular set} of the map is the set of all singular points of this. 

A {\it Special generic} map is a smooth ($C^{\infty}$) map from an $m$-dimensional manifold with no boundary into an $n$-dimensional manifold with no boundary at each {\it singular point} of which it is represented as
$(x_1,\cdots x_m) \mapsto (x_1,\cdots,x_n,{\Sigma}_{j=1}^{m-n} {x_{n+j}}^2)$ for suitable (local) coordinates. Canonical projections of unit spheres are also special generic. Their singular sets are equators and the restrictions to the singular sets are embeddings.

\begin{Prop}
\label{prop:1}
For a special generic map $c:X \rightarrow Y$, the singular set is a smooth closed submanifold of dimension $\dim Y-1$ of the manifold $X$ and has no boundary. Furthermore, the restriction of the map to the singular set is a smooth immersion.
\end{Prop}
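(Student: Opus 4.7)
My plan is to read the whole proposition directly off the local normal form given in the definition and then patch the local pictures together. Fix a singular point $p\in X$ and choose a chart around $p$ in which $c$ takes the standard form
\[
(x_1,\ldots,x_m)\longmapsto\bigl(x_1,\ldots,x_{n-1},\,x_n^{2}+\cdots+x_m^{2}\bigr)
\]
(reading the normal form so that the output has the correct dimension $n=\dim Y$). Writing out the Jacobian, the first $n-1$ rows form an identity block on the coordinates $x_1,\ldots,x_{n-1}$, while the last row is $(0,\ldots,0,2x_n,\ldots,2x_m)$. Hence $\mathrm{rank}\,dc$ equals $n$ except on the coordinate subspace
\[
S_{\mathrm{loc}}\;=\;\{x_n=x_{n+1}=\cdots=x_m=0\},
\]
where it drops to $n-1$. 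This locus is a smooth linear subspace of dimension $n-1$ inside the chart, which immediately exhibits the local submanifold structure of the singular set as well as the absence of boundary.

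Next I would globalize. The singular set $S(c)$ is characterized by a coordinate-free condition on $\mathrm{rank}\,dc$, so the local chart descriptions agree on overlaps and glue to a smooth submanifold of $X$ of the dimension identified above. For closedness of $S(c)$ in $X$ I would appeal to lower semi-continuity of the rank: $S(c)=\{p : \mathrm{rank}\,dc_p\le n-1\}$ is the complement of the open set on which the differential has full rank, hence is closed.

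For the final assertion, in the chart above the restriction $c|_{S_{\mathrm{loc}}}$ reads $(x_1,\ldots,x_{n-1})\mapsto(x_1,\ldots,x_{n-1},0)$, whose differential is the standard inclusion $\mathbb{R}^{n-1}\hookrightarrow\mathbb{R}^{n}$ and is therefore injective at every point. Consequently $c|_{S(c)}$ is a smooth immersion. I do not foresee any genuine obstacle here: the proposition is essentially bookkeeping around the local normal form, and the only mild care points are (i) interpreting that normal form so its output dimension matches $\dim Y=n$ and (ii) verifying that the rank condition is coordinate-independent so that the local pieces assemble into one global submanifold. Both are standard, and no transversality theorem or deeper input is needed.
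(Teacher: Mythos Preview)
The paper does not actually supply its own proof of this proposition; it is stated as a known fact, with the reader referred to Golubitsky--Guillemin and Saeki. Your direct verification from the local normal form is exactly the standard argument one finds in those references, so there is nothing alternative to compare against and your reasoning is sound.

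One point worth flagging: your chart computation shows that the singular set has dimension $n-1=\dim Y-1$, whereas the proposition as printed claims dimension $\dim X-1$. This is evidently a slip in the paper (for fold and special generic maps the singular set has codimension $m-n+1$, hence dimension $n-1$, which also matches the sphere--equator example stated just before the proposition). Your local model yields the correct dimension; you might note the discrepancy explicitly rather than silently repair it.
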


{\it Fold} maps are defined as higher dimensional versions of Morse functions in a similar way. They also enjoy properties similar to ones of Proposition \ref{prop:1}. 

Special generic maps are fold maps. 
Fold maps are also fundamental and important tools in the theory of algebraic topology and differential topology of manifolds. 

\cite{golubitskyguillemin} is for systematic fundamental or advanced theory of singularity of differentiable maps and singularities for fold maps appaer. \cite{thom,whitney} are regarded as pionnering studies on fold maps, showing that a necessary and sufficient condition for a closed manifold whose dimension is at least $2$ to admit a fold map into ${\mathbb{R}}^2$ is that the Euler number is even for example.
See also \cite{eliashberg,eliashberg2} for the existence of fold maps. \cite{saeki} is one of pioneering studies on applications to algebraic topology and differential topology of manifolds. For example, in Theorem \ref{thm:4} and Remark \ref{rem:1} fold maps appear shortly. Except such scenes, we do not consider general fold maps in the present paper. 

We present Theorem \ref{thm:1}, showing explicitly that special generic maps give strong restrictions on the topologies and the differentiable structures of the manifolds in various cases. This makes special generic maps attractive.
 A {\it standard} sphere is a smooth manifold diffeomorphic to a unit sphere. A {\it homotopy sphere} is a smooth manifold homeomorphic to a unit sphere. An {\it exotic} sphere means a homotopy sphere which is not diffeomorphic to any standard sphere. 

\begin{Thm}[\cite{calabi, saeki2, saeki3, wrazidlo}]
\label{thm:1}
Any exotic sphere of dimension $m \geq 4$ admits no special generic maps into $\mathbb{R}^n$ for $n=m-3,m-2,m-1$. Homotopy spheres except $4$-dimensional exotic spheres admit special generic maps into ${\mathbb{R}}^2$.
Furthermore, $7$-dimensional oriented homotopy spheres of at least 14 types of all $28$ types admit no special generic maps into ${\mathbb{R}}^3$.
\end{Thm}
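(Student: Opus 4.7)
The plan is to use the \emph{Stein factorization} of a special generic map, the standard tool in this subject. Given a special generic $c : X^m \to Y^n$, let $W_c$ be the quotient of $X$ obtained by collapsing each connected component of each fibre of $c$ to a point; then $c$ factors as $\bar c \circ q_c$ with $q_c : X \to W_c$ the quotient map and $\bar c : W_c \to Y$ the induced map. Using Proposition~\ref{prop:1} and the local normal form at singular points, one checks that $W_c$ is a smooth compact $n$-manifold with boundary, that $q_c$ carries the singular set of $c$ diffeomorphically onto $\partial W_c$, that $\bar c$ is a smooth immersion, and that $X$ is reconstructible from a linear $D^{m-n+1}$-bundle $\xi$ over $W_c$ as the union, along their common boundary, of the sphere bundle $S(\xi)$ over $W_c$ and the disc bundle $\xi$ restricted to $\partial W_c$. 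In particular the diffeomorphism type of $X$ is determined by $(W_c,\xi)$.

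For the first assertion, take $n\in\{m-1,m-2,m-3\}$, so that the fibre sphere has dimension $1$, $2$, or $3$. Assuming $X$ is a homotopy sphere, a Mayer--Vietoris argument on the reconstruction forces $W_c$ to have the homotopy type of a point; invoking the relevant smooth Poincar\'e-type theorem in dimension $n$, one concludes $W_c \cong D^n$. Then any linear bundle over $W_c$ is trivial, so $\xi$ is trivial and $X$ is diffeomorphic to the standard $S^m$. I expect the fibre-$S^3$ case ($n=m-3$) to demand the most care, because one then must combine the classification of $D^4$-bundles with the smooth Poincar\'e theorem in the relevant low dimension and an $h$-cobordism argument to rule out exotic differentiable twists.

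For the second assertion, I would construct the map explicitly. The standard projection $S^m \to \mathbb{R}^2$ corresponds to the Stein data $(W,\xi)=(D^2, D^2\times D^{m-1})$. For an arbitrary homotopy sphere $\Sigma^m$ of dimension $\neq 4$ (exotic or not), one presents $\Sigma^m$ as a twisted sphere $D^m \cup_\varphi D^m$ and constructs $(W,\xi)$ with $W$ an appropriate surface with boundary immersed in $\mathbb{R}^2$ and $\xi$ a linear $D^{m-1}$-bundle, so that the reconstruction realises $\Sigma^m$. The technical input, valid in dimension $\neq 4$, is that the twisting diffeomorphism $\varphi$ can be arranged compatibly with a projection onto a $2$-disc factor, which allows the twist to be absorbed into a choice of linear bundle.

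For the third assertion, assume $\Sigma^7$ admits such a map $c : \Sigma^7 \to \mathbb{R}^3$. Then $(W_c,\xi)$ consists of a compact $3$-manifold $W_c$ immersed in $\mathbb{R}^3$ together with a linear $D^4$-bundle $\xi$. Since $\bar c$ is a codimension-$0$ immersion, $W_c$ is stably parallelisable, which strongly constrains the characteristic classes of $\xi$. Computing the Eells--Kuiper $\mu$-invariant of $\Sigma^7 \in \Theta_7 \cong \mathbb{Z}/28$ in terms of Pontryagin numbers of a closed $7$-manifold produced from $(W_c,\xi)$, one shows that the realisable values of $\mu$ form a subset of order $\leq 14$. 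The main obstacle, and where the bulk of the work lies, is the divisibility argument halving the available $\mu$-values; it rests on a spin cobordism computation that exploits the parallelisability of $W_c$ and the particular form of the Pontryagin data allowed for $\xi$.
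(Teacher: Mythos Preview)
The paper does not prove Theorem~\ref{thm:1}; it is quoted as a known result from the cited references (Calabi, Saeki, Wrazidlo) and serves only as motivation. There is therefore no ``paper's own proof'' to compare your proposal against.

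That said, your outline is broadly the route taken in those sources, via the Stein factorization $(W_c,\xi)$, but two steps are stated too loosely to count as a proof. First, for the claim that $W_c$ is contractible when $X$ is a homotopy sphere and $n\in\{m-1,m-2,m-3\}$: a Mayer--Vietoris argument on the decomposition $X=S(\xi)\cup \xi|_{\partial W_c}$ gives homological information, and Proposition~\ref{prop:3} gives $\pi_j(W_c)\cong\pi_j(X)$ only for $j\le m-n\le 3$; neither immediately yields full contractibility of the $n$-manifold $W_c$ (whose dimension is $m-1$, $m-2$, or $m-3$, typically much larger than $3$). The actual arguments in \cite{saeki2,saeki3} proceed differently, exploiting that $X$ is the boundary of the total space of $\xi$ and invoking $h$-cobordism and Cerf-type results ($\Gamma_k=0$ in the relevant range). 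Second, even once $W_c\cong D^n$ and $\xi$ is trivial, the reconstruction still involves a gluing diffeomorphism of $S^{m-1}$, which could a priori produce an exotic sphere; you acknowledge this only for the $S^3$-fibre case, but it is present in all three cases and is where the dimension restriction $m\ge 4$ and the vanishing of the appropriate $\Gamma$-groups enter. Your sketches for the second assertion (explicit construction over a planar surface, as in \cite{saeki2}) and the third (Eells--Kuiper $\mu$-invariant computation, as in \cite{wrazidlo}) are faithful to the literature.
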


\cite{milnor} is well-known as a pioneering study on exotic spheres. The theory of \cite{eellskuiper} shows there exist exactly $28$ types of $7$-dimensional oriented homotopy spheres.     

More studies on special generic maps are presented in the third section. 

Our paper focuses on the images of special generic maps on closed manifolds. As is presented in the second section, they are smoothly immersed compact manifolds whose dimensions are same as those of the targets and they have much information on homology groups and cohomology rings in considerable cases. The present paper concerns essentially a constructive study of compact manifolds smoothly immersed or embedded in the Euclidean space of the same dimension to obtain special generic maps via Proposition \ref{prop:2} in the next section. Although such systematic studies on construction is very fundamental, even elementary studies and results seem to have not been known well until now due to the difficulty of construction of explicit manifolds and finding good applications to problems in geometry and general fields of mathematics. For example, Nishioka's proposition on restrictions on integral homology groups of compact and orientable manifolds, presented as Proposition \ref{prop:4} in the third section, is a very recent and explicit result. This yields a complete answer to the following problem in \cite{nishioka}: determine $5$-dimensional closed and simply-connected manifolds admitting special generic maps into ${\mathbb{R}}^n$ for $n=1,2,3,4$.  

We present Main results. Connected sums and boundary connected sums of manifolds are considered in the smooth category throughout the present paper. For fundamental or advanced notions, theory and notation on (co)homology and homotopy, see \cite{hatcher} for example. A {\it $Y/Z$-remove} to a compact and smooth manifold is defined as our new notion in the third section.
\begin{MThm}
	\label{mthm:1}
Let $G$ be an arbitrary finitely generated commutative group and $G_1$ and $G_2$ arbitrary free finitely generated commutative groups. Then there exists a $5$-dimensional compact and simply-connected manifold $X$ smoothly
embedded in ${\mathbb{R}}^5$ enjoying the following properties.
\begin{enumerate}
\item $X$ is represented as a boundary connected sum of manifolds diffeomorphic to either of the following two.
\begin{enumerate}
\item A manifold obtained by a $Y/Z$-remove to a $5$-dimensional standard sphere or a copy of the $5$-dimensional unit disk $D^5$ for a suitable $3$-dimensional closed, connected and orientable manifold $Y$ such that $H_1(Y;\mathbb{Z})$ is a finite
group which is not trivial and a one-point set $Z$.
\item $S^k \times D^{5-k}$ {\rm (}$2 \leq k \leq 4${\rm )}.
\end{enumerate}
\item $H_2(X;\mathbb{Z})$ is isomorphic to $G$. $H_j(X;\mathbb{Z})$ is isomorphic to $G_{j-2}$ for $j=3,4$.
\end{enumerate}
\end{MThm}
From Propositions \ref{prop:2} and \ref{prop:3} we have Main Theorem \ref{mthm:2}. $W_f$ here is introduced in the next section. This is, in short, the space of all connected components of the special generic map $f:M \rightarrow {\mathbb{R}}^n$ and an $n$-dimensional compact smooth manifold.
\begin{MThm}
	\label{mthm:2}
For any commutative group $A$, any integer $m>5$ and any manifold $X$ in Main Theorem \ref{mthm:1}, there exist an $m$-dimensional closed and simply-connected manifold $M$ and a special generic map $f:M \rightarrow {\mathbb{R}}^5$ such that $W_f$ is diffeomorphic to $X$, that the restriction to the singular set is an embedding and that $H_j(M;A) \cong H_j(X;A)$ for $0 \leq j \leq m-5$.
\end{MThm}
In the next section, we review fundamental algebraic topological and differential topological theory of special generic maps. We introduce fundamental construction, and (co)homological information of the manifolds admitting special generic maps for example. Key facts are that a special generic map $f$ is represented as the composition of a surjection onto a smooth manifold $W_f$ with a smooth immersion whose codimension is $0$ into the manifold of the target, that the manifold of the target of the first surjection has much (co)homological information of the manifold and that from a smooth immersion whose codimension is $0$ into a manifold with no boundary, we can construct a special generic on a suitable closed manifold. The third section is devoted to our main ingredient including Main Theorems. We first review known algebraic topological and differential topological studies of special generic maps including a fundamental result by Nishioka \cite{nishioka} or Proposition \ref{prop:4}. After that, we mainly present new construction of compact manifolds smoothly immersed or embedded in the Euclidean space of the same dimension. As presented in the next section, we have a special generic map on a suitable closed manifold from the immersion or embedding. As a result we can extend a result in \cite{kitazawa4}.
\section{Algebraic topological and differential topological properties of special generic maps.}
\label{sec:2}
The following proposition comes from a theorem characterizing manifolds admitting special generic maps in \cite{saeki2}. 
In the present paper, a diffeomorphism on a smooth manifold is assumed to be smooth and the {\it diffeomorphism group} of the manifold is the group consisting of all diffeomorphisms on the manifold, endowed with the so-called {\it Whitney $C^{\infty}$ topology}.  
A {\it smooth} bundle is a bundle whose fiber is a smooth manifold and whose structure group is the diffeomorphism group. A {\it linear} bundle is a smooth bundle whose fiber is a Euclidean space, a unit disk, or a unit sphere and whose structure group acts on the fiber as some groups of linear transformations.

\begin{Prop}[\cite{saeki2}]
\label{prop:2}
Let $m \geq n \geq 1$ be integers. Let ${\bar{f}}_N$ be a smooth immersion from an $n$-dimensional compact manifold $\bar{N}$ into an $n$-dimensional manifold $N$ with no boundary. In this situation, there exist some $m$-dimensional closed manifold $M$ and a special generic map $f:M \rightarrow N$ represented as the composition of a surjection $q_{f_N}$ onto $\bar{N}$ with ${\bar{f}}_N$ such that the following three properties are enjoyed hold.
\begin{enumerate}
\item The restriction of $q_{f_N}$ to the singular set of $f$ gives a diffeomorphism onto the boundary $\partial \bar{N}$.
\item The composition of the restriction of $q_{f_N}$ to the preimage of some small collar neighborhood $N(\partial \bar{N})$ with the canonical projection to the boundary gives a trivial linear bundle whose fiber is diffeomorphic to the {\rm (}$m-n+1${\rm )}-dimensional unit disk $D^{m-n+1}$.
\item On the preimage of the complementary set of the interior of the collar neighborhood $N(\partial \bar{N})$, $q_{f_N}$ gives a trivial smooth bundle whose fiber is an {\rm (}$m-n${\rm )}-dimensional standard sphere.
\end{enumerate}
If $N$ is orientable, then we can obtain $M$ as an orientable manifold. If $N$ is connected, then we can obtain $M$ as a connected manifold. If $N$ satisfies both, then we can construct $M$ satisfying both.
\end{Prop}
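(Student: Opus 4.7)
The plan is to ``thicken'' $\bar{N}$ into a closed $m$-manifold $M$ by attaching a trivial $(m-n+1)$-disc bundle over $\partial\bar{N}$ to a trivial $S^{m-n}$-bundle over the complement of a collar, and then to verify that the obvious surjection onto $\bar{N}$ composed with ${\bar{f}}_{N}$ is special generic. Fix a collar $c:\partial\bar{N}\times[0,1]\hookrightarrow\bar{N}$ with $c(\cdot,0)=\mathrm{id}_{\partial\bar{N}}$, and set $N(\partial\bar{N}):=c(\partial\bar{N}\times[0,1/2])$ and $\bar{N}_{0}:=\bar{N}\setminus c(\partial\bar{N}\times[0,1/2))$. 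Define
\[
M \;:=\; \bigl(\bar{N}_{0}\times S^{m-n}\bigr)\;\cup_{\Phi}\;\bigl(\partial\bar{N}\times D^{m-n+1}\bigr),
\]
where $\Phi$ identifies $(c(x,1/2),y)\in\partial\bar{N}_{0}\times S^{m-n}$ with $(x,y)\in\partial\bar{N}\times\partial D^{m-n+1}$ by the identity on each factor. This $M$ is a closed $m$-manifold.

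Next let $q_{f_{N}}:M\to\bar{N}$ be the first projection on the sphere bundle piece and $(x,v)\mapsto c(x,\rho(|v|^{2}))$ on the disc piece, where $\rho:[0,1]\to[0,1/2]$ is a smooth monotone function equal to $t/2$ near $t=0$ and matching the collar coordinate with all derivatives as $|v|\to 1$. With such a choice the two formulas agree on $\partial\bar{N}\times S^{m-n}$ and $q_{f_{N}}$ is smooth. Properties (1) and (2) are then built into the construction, since the disc piece and the sphere bundle piece realise the required trivial $D^{m-n+1}$-bundle over $\partial\bar{N}$ and the trivial $S^{m-n}$-bundle over $\bar{N}_{0}$. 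Set $f:={\bar{f}}_{N}\circ q_{f_{N}}$. The map $q_{f_{N}}$ is a submersion away from the zero section $\partial\bar{N}\times\{0\}$ of the disc piece, and the immersion ${\bar{f}}_{N}$ preserves rank, so $f$ has no singular points off this zero section. Near a point $(x_{0},0)$ of it, the map ${\bar{f}}_{N}\circ c$ embeds a one-sided neighbourhood of $x_{0}$ in $\partial\bar{N}\times[0,1)$ into $N$; straightening the resulting coordinates on $N$ by the inverse function theorem brings $f$ into the form
\[
(u_{1},\dots,u_{n-1},v_{1},\dots,v_{m-n+1})\;\longmapsto\;\Bigl(u_{1},\dots,u_{n-1},\tfrac{1}{2}\textstyle\sum_{j=1}^{m-n+1}v_{j}^{2}\Bigr),
\]
which is the required special generic normal form up to a permutation of target coordinates and a rescaling of the last one.

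Finally, since ${\bar{f}}_{N}$ is a codimension-zero immersion, orientability of $N$ pulls back to orientability of $\bar{N}$, and each of the two trivial bundles comprising $M$ is then orientable, so $M$ is orientable. If $N$ is connected, apply the construction to a single connected component of $\bar{N}$ (replacing ${\bar{f}}_{N}$ by its restriction) to obtain a connected $M$; the orientable and connected cases are plainly compatible. The main technical obstacle, in my view, is the smooth gluing of $q_{f_{N}}$ along $\partial\bar{N}\times S^{m-n}$ while preserving the quadratic form $|v|^{2}/2$ near $v=0$, which requires choosing the radial profile $\rho$ so that all of its one-sided derivatives at $|v|=1$ match those of the collar coordinate on the other side. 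This is a routine bump-function interpolation but is the only place where genuine care is required.
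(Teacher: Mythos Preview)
The paper does not supply its own proof of this proposition; it is quoted from \cite{saeki2} without argument. Your construction is precisely the standard one underlying that reference and is correct: one takes the trivial $S^{m-n}$-bundle over (a copy of) $\bar{N}$ and caps off its boundary $\partial\bar{N}\times S^{m-n}$ fibrewise with $\partial\bar{N}\times D^{m-n+1}$, and the projection, adjusted radially near the boundary to produce the quadratic fold model, is the required $q_{f_N}$. The smoothing of $q_{f_N}$ across the gluing locus that you flag is indeed the only genuine technical point, and your bump-function interpolation handles it.

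One small correction concerns the connectedness clause. Restricting to a single component of $\bar{N}$ alters the given immersion $\bar{f}_N$, so you are no longer proving the stated conclusion for the original data. The cleaner argument is that whenever $\bar{N}$ itself is connected, the manifold $M$ you built is automatically connected, since $q_{f_N}:M\to\bar{N}$ is a surjection with connected fibres (discs and spheres of dimension $m-n\geq 1$). The hypothesis ``$N$ connected'' in the proposition is almost certainly meant to read ``$\bar{N}$ connected''; connectedness of the target $N$ alone cannot force $M$ to be connected when $\bar{N}$ has several components, and no variant of the construction repairs this without changing $\bar{f}_N$.
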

In general, special generic maps have structures in Proposition \ref{prop:2} where the two bundles may not be trivial (see \cite{saeki2}).
Hereafter, let $q_{f_N}$, $\bar{N}$ and ${\bar{f}}_N$ denote by $q_f$, $W_f$ and $\bar{f}$, respectively. This is based on well-known notation on the so-called {\it Stein factorizations} and the so-called {\it Reeb spaces} of maps between topological spaces. The notion of the {\it Reeb space} of a (continuous) map (between topological spaces) appear shortly again in Remark \ref{rem:1} for example.

We can know simple examples explaining well about special generic maps in \cite{saeki2}
 and also in \cite{kitazawa3, kitazawa4} for example.
\begin{Prop}[E. g. \cite{kitazawa2, kitazawa3, saeki, saeki2, saekisuzuoka}]
\label{prop:3}
Let $A$ be a commutative group. In the situation of Proposition \ref{prop:2}, $q_f:M \rightarrow W_f:=\bar{N}$ induces isomorphisms ${q_f}_{\ast}:H_j(M;A) \cong H_j(W_f;A)$, ${q_f}_{\ast}:{\pi}_j(M) \cong {\pi}_j(W_f)$, and ${q_f}^{\ast}:H^{j}(W_f;A) \cong H^{j}(M;A)$ for $0 \leq j \leq m-n$ where as an additional assumption $M$ is assumed to be connected in discussing homotopy groups for example.
\end{Prop}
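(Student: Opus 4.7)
The plan is to exploit the explicit decomposition of $q_{f}$ from Proposition~\ref{prop:2}. Writing $B_{0} := \bar{N} \setminus \operatorname{int}(N(\partial \bar{N}))$, Proposition~\ref{prop:2} identifies $q_{f}^{-1}(N(\partial \bar{N}))$ with $\partial \bar{N} \times D^{m-n+1}$ (trivial disc bundle) and $q_{f}^{-1}(B_{0})$ with $B_{0} \times S^{m-n}$ (trivial sphere bundle), glued along $\partial \bar{N} \times S^{m-n}$. Thus $M$ is the homotopy pushout of $\partial \bar{N} \times D^{m-n+1} \leftarrow \partial \bar{N} \times S^{m-n} \to B_{0} \times S^{m-n}$ and $q_{f}$ is the map to the analogous (trivially collapsed) pushout presenting $W_{f} = \bar{N}$.

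For homology and cohomology, I would compare the Mayer--Vietoris long exact sequences for $M$ and $\bar{N}$ associated to these decompositions. By K\"unneth, each term on the $M$ side equals the corresponding term on the $\bar{N}$ side plus a copy shifted down by $m-n$ coming from the top class of $S^{m-n}$, so the vertical comparison maps are isomorphisms on every MV term in degrees $\le m-n-1$ (the extra shifted summand vanishes there). The five-lemma then gives $(q_{f})_{\ast}\colon H_{j}(M;A) \cong H_{j}(W_{f};A)$ for $j \le m-n-1$, and the cohomology statement follows by the dual argument or universal coefficients.

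To close the argument at $j = m-n$ I would use the following geometric input. Any sphere fiber $S^{m-n}_{p}$ over an interior point $p \in B_{0}$ bounds an embedded $(m-n+1)$-disc in $M$: pick a smooth arc from $p$ to a point of $\partial \bar{N}$; its $q_{f}$-preimage is such a disc because the arc's endpoint in the collar has a single-point fiber thanks to the $D^{m-n+1}$-bundle structure. Consequently the extra K\"unneth classes in $H_{m-n}(B_{0} \times S^{m-n})$ become null-homologous in $M$, and a snake-lemma chase on the Mayer--Vietoris ladder upgrades the isomorphism to $j = m-n$. For homotopy groups I would verify directly that $q_{f}$ is a Dold--Thom quasi-fibration by checking, via the local form $(y,x) \mapsto (y, \lVert x \rVert^{2})$ near the singular set, that the inclusion of each genuine fiber (either $S^{m-n}$ or a point) into its homotopy fiber is a weak equivalence. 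The associated long exact sequence together with the $(m-n-1)$-connectedness of $S^{m-n}$ yields $\pi_{j}(M) \cong \pi_{j}(W_{f})$ for $j \le m-n-1$, and the same bounding-disc kills the image of $\pi_{m-n}(S^{m-n})$ in $\pi_{m-n}(M)$, giving the isomorphism at $j = m-n$ as well.

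The main obstacle is precisely the top degree $j = m-n$: a pure K\"unneth / five-lemma argument falls one step short, and one must exploit the collar structure from Proposition~\ref{prop:2} --- specifically the bounding $(m-n+1)$-disc supplied by the trivial $D^{m-n+1}$-bundle over $\partial \bar{N}$ --- to kill the extra top sphere classes in both $H_{m-n}(M)$ and $\pi_{m-n}(M)$.
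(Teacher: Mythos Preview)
The paper does not give its own proof of Proposition~\ref{prop:3}; it simply cites \cite{kitazawa2}, \cite{kitazawa3}, \cite{saeki}, \cite{saeki2}, \cite{saekisuzuoka} and remarks that the statement ``holds for general special generic maps as before.'' There is therefore no in-paper argument to compare your proposal against.

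On the proposal itself: your Mayer--Vietoris comparison for (co)homology is correct, and the bounding-disc observation is exactly the right device to close the gap at $j=m-n$. The genuine problem is the quasi-fibration step. Over a path-connected base the homotopy fiber is well defined up to weak equivalence, so if $q_f$ were a Dold--Thom quasi-fibration then the point-fiber over $\partial\bar N$ and the $S^{m-n}$-fiber over the interior would both be weakly equivalent to that common homotopy fiber, forcing $S^{m-n}\simeq_w\ast$, which is false for $m>n$. Hence $q_f$ is \emph{not} a quasi-fibration, and the long exact sequence you invoke does not exist as stated.

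The fix is close to what you already wrote at the end. Use the collar to deformation retract $\bar N$ onto $B_0$, so that any map $S^j\to\bar N$ and any null-homotopy of $q_f\circ\alpha$ can be pushed into the interior, where $q_f$ \emph{is} a genuine $S^{m-n}$-bundle and homotopy lifting is available. This yields the isomorphism on $\pi_j$ for $j\le m-n-1$ from the fibration sequence over the interior, and for $j=m-n$ your bounding $(m{-}n{+}1)$-disc shows the fiber inclusion is null-homotopic, giving injectivity; surjectivity follows from the existence of a section of $q_f$ (both bundles in Proposition~\ref{prop:2} are trivial, and the two obvious sections can be made to agree on $\partial B_0$). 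Replace the quasi-fibration paragraph with this argument and the outline goes through.
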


This is in our several scenes important.
This holds for general special generic maps and more general smooth maps of certain classes. See the articles referred to here. Remark \ref{rem:1} refers to this shortly for example.

\section{Compact manifolds smoothly immersed or embedded in the Euclidean space of the same dimension and Main Theorems.}
\label{sec:3}
\subsection{Several algebraic topological and differential topological studies of special generic maps including ones on $W_f$ for a special generic map $f$ in Proposition \ref{prop:2}.}
The following was shown in \cite{nishioka} and the proof works in the topology category.

\begin{Prop}[\cite{nishioka}]
\label{prop:4}
Let $A$ be a principal ideal domain. Let $P$ be a compact, connected and orientable manifold whose dimension is greater than or equal to $2$ satisfying $H_1(P;A)=\{0\}$. Then, $H_{j}(P;A)$ is free for $j=\dim P-2,\dim P-1$. Furthermore, if $A=\mathbb{Z}$, then without assuming that $P$ is orientable, $P$ is shown to be orientable.
\end{Prop}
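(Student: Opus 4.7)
Let $n=\dim P$. The plan is to convert the freeness of $H_{n-1}(P;A)$ and $H_{n-2}(P;A)$ into freeness of $H_{0}(P,\partial P;A)$ and $H_{1}(P,\partial P;A)$ via Poincar\'e--Lefschetz duality and the universal coefficient theorem, and then to deduce the latter from the tail of the long exact sequence of the pair $(P,\partial P)$, using the hypothesis $H_{1}(P;A)=\{0\}$.

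More precisely, since $P$ is a compact, orientable PL $n$-manifold, Poincar\'e--Lefschetz duality gives isomorphisms $H_{n-k}(P;A)\cong H^{k}(P,\partial P;A)$, which the universal coefficient theorem writes as
\[
H^{k}(P,\partial P;A)\cong \mathrm{Hom}_{A}\!\bigl(H_{k}(P,\partial P;A),A\bigr)\oplus\mathrm{Ext}^{1}_{A}\!\bigl(H_{k-1}(P,\partial P;A),A\bigr).
\]
Compactness of $P$ makes every module appearing here finitely generated over the PID $A$, and for any finitely generated $A$-module $M$ the dual $\mathrm{Hom}_{A}(M,A)$ is free (the torsion summand contributes nothing, and a free summand $A^{r}$ dualises to $A^{r}$). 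Hence the argument reduces to showing that the two Ext-factors obtained for $k=1,2$ vanish, i.e.\ that $H_{0}(P,\partial P;A)$ and $H_{1}(P,\partial P;A)$ are free.

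For this final step, the plan is to read off the tail
\[
H_{1}(P;A)\to H_{1}(P,\partial P;A)\to H_{0}(\partial P;A)\to H_{0}(P;A)\to H_{0}(P,\partial P;A)\to 0
\]
of the long exact sequence of the pair. If $\partial P=\emptyset$ both relative modules in question vanish and there is nothing more to check. If $\partial P\neq\emptyset$, then connectedness of $P$ forces the rightmost nontrivial map to be surjective, so $H_{0}(P,\partial P;A)=0$, and the assumption $H_{1}(P;A)=\{0\}$ identifies $H_{1}(P,\partial P;A)$ with a submodule of $H_{0}(\partial P;A)\cong A^{c}$, where $c$ is the number of components of $\partial P$. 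Over a PID every submodule of a free module is free, so $H_{1}(P,\partial P;A)$ is free as well.

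The argument is really just a careful ordering of three standard tools, and I do not foresee a genuine obstruction; the one point that deserves care is the index bookkeeping through duality and the UCT, making sure that the Ext-factor one actually needs to kill is precisely the one controlled by $H_{1}(P;A)=\{0\}$. The low-dimensional edge case $\dim P=2$ is clear by direct inspection, since the conclusion there becomes that $H_{0}(P;A)\cong A$ and $H_{1}(P;A)=\{0\}$, both obviously free.
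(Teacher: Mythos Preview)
The paper does not supply its own proof of this proposition; it is quoted from \cite{nishioka} with only the remark that the argument carries over to the PL category. Your route via Poincar\'e--Lefschetz duality, the universal coefficient theorem over the PID $A$, and the tail of the long exact sequence of the pair $(P,\partial P)$ is the standard one and is correct in all essentials; the index bookkeeping you flag is indeed the only place one must be careful, and you have it right.

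One small slip: in the case $\partial P=\emptyset$ you assert that ``both relative modules in question vanish,'' but in fact $H_{0}(P,\emptyset;A)=H_{0}(P;A)\cong A$, not $0$. This does no damage to the argument, since $A$ is free over itself and hence $\mathrm{Ext}^{1}_{A}(A,A)=0$; you simply need ``are free'' rather than ``vanish'' at that point. With that correction the proof is complete.
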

Note that we can easily know this for $\dim P=2,3$. Nishioka applied this for $\dim P=4$ and $\partial P \neq \emptyset$ to determine dimensions of Eulidean spaces into which $5$-dimensional closed and simply-connected manifolds admit special generic maps.
$5$-dimensional closed and simply-connected manifolds are studied in
\cite{barden} for example. They are classified completely in the topology, the PL, and
the smooth categories there.
\begin{Thm}[\cite{nishioka}]
\label{thm:2}
A $5$-dimensional closed and simply-connected admits a special generic map into ${\mathbb{R}}^n$ for $n=3,4$ if and only if it is homeomorphic to {\rm (}and as a result diffeomorphic to{\rm )} a $5$-dimensional standard sphere or represented as a connected sum of the total spaces of {\rm (}linear{\rm )} bundles over the $2$-dimensional unit sphere $S^2$ whose fiber is the $3$-dimensional unit sphere $S^3$.
\end{Thm}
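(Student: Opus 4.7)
The claim is a biconditional. For the ``only if'' direction, let $f\colon M \to \mathbb{R}^n$ be a special generic map with $n \in \{3,4\}$. Its Stein factorisation yields a compact $n$-manifold $W_f$ smoothly immersed in $\mathbb{R}^n$ (hence orientable), and Proposition \ref{prop:3} gives $\pi_1(W_f) \cong \pi_1(M) = 0$. Proposition \ref{prop:4} then forces $H_{n-2}(W_f;\mathbb{Z})$ and $H_{n-1}(W_f;\mathbb{Z})$ to be free. When $n=3$, Proposition \ref{prop:3} in degree $j = m-n = 2$ already gives the isomorphism $H_2(M;\mathbb{Z}) \cong H_2(W_f;\mathbb{Z})$, and hence $H_2(M;\mathbb{Z})$ is free. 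When $n=4$, those isomorphisms reach only degree one, so I would instead use the bundle description underlying Proposition \ref{prop:2} to realise $M$ as the boundary of a $2$-disc bundle $E \to W_f$ (which is orientable since $H^1(W_f;\mathbb{Z}/2) = 0$). Since $\dim E = 6$, Poincar\'e--Lefschetz duality and the universal coefficient theorem give $H_k(E,\partial E) \cong H^{6-k}(W_f;\mathbb{Z})$, with $H^4(W_f;\mathbb{Z}) = 0$ (because $H_4(W_f) = 0$ and $H_3(W_f)$ is free) and $H^3(W_f;\mathbb{Z})$ free. The relevant segment of the long exact sequence of the pair $(E, \partial E)$ then sandwiches $H_2(M) = H_2(\partial E)$ between two free abelian groups, so $H_2(M;\mathbb{Z})$ is free. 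Once this is established, Barden's classification \cite{barden} identifies $M$ either as $S^5$ (when $H_2(M) = 0$) or as a connected sum of total spaces of linear $S^3$-bundles over $S^2$.

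For the ``if'' direction, $S^5 \subset \mathbb{R}^6$ admits the restriction of a linear projection $\mathbb{R}^6 \to \mathbb{R}^n$, which is special generic. For a single linear $S^3$-bundle $N$ over $S^2$, I would produce $N$ as $\partial E$ for a suitable linear $D^{m-n+1}$-bundle $E$ over a $D^{n-2}$-bundle $W_f$ over $S^2$ which immerses into $\mathbb{R}^n$, and then apply Proposition \ref{prop:2} (in its general form permitting nontrivial linear bundles) to obtain a special generic map on $N$ into $\mathbb{R}^n$ whose image is $W_f$. For a connected sum $\#_i N_i$, I would take a boundary connected sum of the corresponding $W_{f_i}$'s inside $\mathbb{R}^n$, so that Proposition \ref{prop:2} delivers a closed manifold diffeomorphic to $\#_i N_i$.

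The delicate step is the $n=4$ case of the ``only if'' direction: Proposition \ref{prop:3} does not reach $H_2(M)$, so freeness must be extracted indirectly via the auxiliary $6$-dimensional manifold $E$. Verifying that $E$ is orientable, that $H^4(W_f;\mathbb{Z})$ vanishes, and that both the cokernel and the kernel appearing in the relevant segment of the long exact sequence of $(E,\partial E)$ are free abelian, is where the main work of the argument lies.
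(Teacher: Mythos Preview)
The paper does not give its own proof of Theorem~\ref{thm:2}; the result is quoted from \cite{nishioka}, and the only information the paper supplies about the argument is that Nishioka applied Proposition~\ref{prop:4} in the case $\dim P=4$, $\partial P\neq\emptyset$ (i.e.\ to $W_f$) and then invoked Barden's classification \cite{barden}. Your outline for $n=3$ and for the ``if'' direction is consistent with this hint and is correct as far as it goes.

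For $n=4$ your argument has a genuine gap. From the long exact sequence of $(E,\partial E)$ you extract a short exact sequence
\[
0\;\longrightarrow\; C \;\longrightarrow\; H_2(M;\mathbb{Z}) \;\longrightarrow\; H_2(W_f;\mathbb{Z}) \;\longrightarrow\; 0,
\]
where $C=\operatorname{coker}\bigl(H_3(E)\to H_3(E,\partial E)\bigr)$. You have correctly checked that $H_2(W_f;\mathbb{Z})$, $H_3(E)\cong H_3(W_f)$ and $H_3(E,\partial E)\cong H^3(W_f)$ are all free; but the cokernel of a map between finitely generated free abelian groups need not be free (e.g.\ $\mathbb{Z}\xrightarrow{\times 2}\mathbb{Z}$), so ``sandwiched between two free abelian groups'' does not yield freeness of $H_2(M;\mathbb{Z})$. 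You yourself flag this as the delicate point, yet you do not actually establish that $C$ is torsion-free. Until that is done---either by identifying the map $H_3(E)\to H_3(E,\partial E)$ concretely, or by an alternative route such as the Gysin sequence of the oriented $S^1$-bundle over $W_f$ (which shows the $H_2$ of the sphere-bundle part injects into the free group $H_2(W_f;\mathbb{Z})$) followed by a careful Mayer--Vietoris argument---the ``only if'' direction for $n=4$ remains incomplete.
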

We introduce several known results on manifolds admitting special generic maps of suitable classes.
\begin{Thm}[\cite{saeki2}.]
\label{thm:3}
A closed and connected manifold $M$ {\rm (}whose dimension is at least $2${\rm )} admits a special generic map into ${\mathbb{R}}^2$ if and only if either of the following two hold. 
\begin{enumerate}
	\item $M$ is a homotopy sphere which is not an $4$-dimensional exotic sphere.
	\item $M$ is represented as a connected sum of the total spaces of smooth bundles over the circle whose fibers are homotopy spheres which are not $4$-dimensional exotic spheres.
\end{enumerate}
\end{Thm}
Furthermore, closed manifolds admitting special generic maps into ${\mathbb{R}}^3$ are classified under constraints on the fundamental groups.
\begin{Thm}[\cite{saekisakuma}]
\label{thm:4}
A $4$-dimensional closed and connected manifold $M$ whose fundamental groups is free admits a special generic map into ${\mathbb{R}}^3$ if and only if 
 if and only if either of the following two hold. 
\begin{enumerate}
	\item $M$ is a $4$-dimensional standard sphere.
	\item $M$ is represented as a connected sum of the total spaces of smooth {\rm (}linear{\rm )} bundles over the circle whose fibers are the $3$-dimensional unit sphere $S^3$ or ones over $S^2$ whose fibers are the $2$-dimensional unit sphere $S^2$.
	
\end{enumerate}
\end{Thm}
\begin{Thm}[E.g. \cite{saekisakuma2}]
\label{rhm:5}
	For some manifold $M$ in the situation of Theorem \ref{thm:4},
there exists a closed and connected smooth manifold $M^{\prime}$ enjoying the following three properties.
\begin{enumerate}
\item $M^{\prime}$ is homeomorphic to $M$.
\item $M^{\prime}$ admits a fold map into ${\mathbb{R}}^3$.
\item $M^{\prime}$ admits no special generic maps into ${\mathbb{R}}^3$.
\end{enumerate}
\end{Thm}
\subsection{A kind of surgery operations to construct compact manifolds smoothly immersed or embedded into the same dimensional Euclidean spaces and main results.}
Let $X$ be a topological space regarded as a CW complex. We can generalize more. However we concentrate on such a case.

We explain about the (integral) homology group of $X \times S^k$ for $k>0$.
For $H_i(X \times S^k;\mathbb{Z})$ and an arbitrary integer $i$, we can define $H_{b,i,k}(X) \subset H_i(X \times S^k;\mathbb{Z})$ as the set of all integral homology classes represented by cycles of the form ${i_p} {\ast}(c)$ where $c$ is a cycle of $X$ and $i_p$ is an inclusion of the form $i_p:X \rightarrow X \times S^k$ satisfying $i_p(x)=(x,p)$ ($p \in S^k$). This is a subgroup and well-defined: this does not depend on $p \in S^k$. Let $H_{f,i,k}(X) \subset H_i(X \times S^k;\mathbb{Z})$ be the set of all integral homology classes in the image of a kind of variants of so-called {\it prism operators} from $H_{b,i-k,k}(X)$ to $H_i(X \times S^k;\mathbb{Z})$. This is defined respecting the structure of the product. This homomorphism is a monomorphism. As a result, $H_i(X \times S^k;\mathbb{Z})$ is the internal direct sum of $H_{b,i,k}(X)$ and $H_{f,i,k}(X)$.
In short, this comes from so-called
K\"unneth theorem. See \cite{hatcher} for example.

Let $X$ be a smooth, compact and connected manifold and $Y$ a connected closed submanifold of $X$ with no boundary embedded smoothly in ${\rm Int}\ X$ so that its normal bundle is trivial.
Let $Z$ be the empty set or a connected closed submanifold of $Y$ with no boundary satisfying $\dim Z<\dim Y$ and embedded smoothly in $Y$.
Let $N(Y)$ be a small closed tubular neighborhood of $Y$ in $X$ and $N(Z)$ a small closed tubular neighborhood of $Z \subset Y$ (in $Y$ if $Z \neq \emptyset$). $N(Y)$ is regarded as the total space of a trivial linear bundle whose fiber is diffeomorphic to $D^{\dim X-\dim Y}$, regarded as a subbundle of a (trivial) normal bundle of $Y$, which is also a linear bundle whose fiber is ${\mathbb{R}}^{\dim X-\dim Y}$. We regard $N(Y)$ as such a trivial linear bundle over $Y$ and let $N(Y,Z) \subset N(Y)$ be the restriction of this bundle to the new base space
$N(Z)$. $E(Y,Z)$ denotes the complementary set of the interior of $N(Y,Z)$ in $N(Y)$. This is regarded as the restriction of the bundle $N(Y)$ over $Y$ to the complementary set $CE(Z)$ of the interior of $N(Z)$ in $Y$ as the new base space. We can remove the interior of $E(Y,Z)$ from $X$ and obtain a ($\dim X$)-dimensional compact smooth manifold with no corner by smoothing in a well-known natural way.
\begin{Def}
We call the procedure of removing the interior of $E(Y,Z)$ from $X$ and smoothing to obtain a compact smooth manifold with no corner a {\it $Y/Z$-remove} to $X$.
\end{Def}

Note that for example, if $\dim X-\dim Y \geq 2$, then the resulting manifold is connected. Note also that by smoothing corners of an arbitrary general smooth manifold, we always have mutually diffeomorphic manifolds. Hereafter
let $X\overline{(Y,Z)}$ denote the resulting manifold.

We investigate (co)homology groups and homotopy groups (fundamental groups) of thess spaces (manifolds) and present our Main Theorems with proofs. \cite{hatcher} also presents strong methods for these studies systematically. For example, homology exact sequences such as ones for pairs of topological spaces and Mayer-Vietoris sequences are important tools.   

Let $NE(Y,Z)$ denote $E(Y,Z) \bigcap N(Y,Z)$. This is regarded as the restriction of the two bundles $E(Y,Z)$ and $N(Y,Z)$ to the new base space $\partial N(Z)$.
Let $SE(Y,Z)$ denote the subbundle of $E(Y,Z)$, which is a bundle over the complementary set $CE(Z)$ of the interior of $N(Z)$ in $Y$ obtained by restricting the fiber to the boundary of the ($\dim X-\dim Y$)-dimensional disk of the fiber of the given bundle. Let $SNE(Y,Z)$ denote the subbundle of $NE(Y,Z)$ obtained in a similar way. We have a Mayer-Vietoris exact sequence
$$\rightarrow H_j(SNE(Y,Z);\mathbb{Z}) \rightarrow H_j(NE(Y,Z);\mathbb{Z}) \oplus H_j(SE(Y,Z);\mathbb{Z}) \rightarrow H_j(\partial E(Y,Z);\mathbb{Z}) \rightarrow$$
and an equivalent exact sequence
$$\rightarrow H_{b,j,\dim X-\dim Y-1}(\partial N(Z)) \oplus H_{f,j,\dim X-\dim Y-1}(\partial N(Z))$$
$$\rightarrow H_j(\partial N(Z);\mathbb{Z}) \oplus H_{b,j,\dim X-\dim Y-1}(CE(Z)) \oplus H_{f,j,\dim X-\dim Y-1}(CE(Z))$$
$\rightarrow H_j(\partial E(Y,Z);\mathbb{Z}) \rightarrow$\\
\ \\
where we have this equivalent exact sequence by the following arguments.
\begin{itemize}
	\item By the definition, $NE(Y,Z)$ is the total space of a trivial linear bundle over $\partial N(Z)$ whose fiber is diffeomorphic to the ($\dim X-\dim Y$)-dimensional unit disk $D^{\dim X-\dim Y}$ and $SNE(Y,Z)$ is the subbundle obtained by restricting the fiber to the boundary.
	\item The previous bundle $NE(Y,Z)$ is regarded as the restrictions of the trivial linear bundle $E(Y,Z)$ over $CE(Z)$ and the trivial linear bundle over $N(Z)$, where these two bundles are the restrictions of the trivial linear bundle $N(Y)$ over $Y$ whose fiber is diffeomorphic to the ($\dim X-\dim Y$)-dimensional unit disk $D^{\dim X-\dim Y}$. 
	This, the previous argument and the definitions of some groups such as $H_{b,j,\dim X-\dim Y-1}(\partial N(Z))$ and $H_{f,j,\dim X-\dim Y-1}(\partial N(Z))$, introduced before, give an isomorphism between $H_j(SNE(Y,Z);\mathbb{Z})$ and $H_{b,j,\dim X-\dim Y-1}(\partial N(Z)) \oplus H_{f,j,\dim X-\dim Y-1}(\partial N(Z))$.
	\item The total space of the bundle $NE(Y,Z)$ collapses to $\partial N(Z)$ where $\partial N(Z)$ here is regarded as the image of a section of the trivial linear bundle. This gives an isomorphism between 
	$H_j(NE(Y,Z);\mathbb{Z})$ and $H_j(\partial N(Z);\mathbb{Z})$.
\item $SE(Y,Z)$ is the subbundle of the bundle $E(Y,Z)$ over $CE(Z)$ obtained by restricting the fiber to the boundary. This gives an isomorphism between $H_j(SE(Y,Z);\mathbb{Z})$ and $H_{b,j,\dim X-\dim Y-1}(CE(Z)) \oplus H_{f,j,\dim X-\dim Y-1}(CE(Z))$.

\end{itemize}

 We can represent the homomorphism from $$H_{b,j,\dim X-\dim Y-1}(\partial N(Z)) \oplus H_{f,j,\dim X-\dim Y-1}(\partial N(Z))$$ by
 the (direct) sum of the two homomorphisms $$\{i_{j^{\prime},1} \oplus i_{j^{\prime}.2} \oplus i_{j^{\prime},3}\}_{j^{\prime}=1}^2$$ into the direct sum $$H_j(\partial N(Z);\mathbb{Z}) \oplus H_{b,j,\dim X-\dim Y-1}(CE(Z)) \oplus H_{f,j,\dim X-\dim Y-1}(CE(Z))$$
 where $i_{j^{\prime},{j^{\prime \prime}}}$ is a homomorphism from the $j^{\prime}$-th summand into the $j^{\prime \prime}$-th summand.  $i_{j^{\prime},{j^{\prime \prime}}}$ is also defined as a suitable homomorphism from a suitable subgroup of the original group of the domain into a suitable subgroup of the group of the target.
We can represent the homomorphism from $$H_j(\partial N(Z);\mathbb{Z}) \oplus H_{b,j,\dim X-\dim Y-1}(CE(Z)) \oplus H_{f,j,\dim X-\dim Y-1}(CE(Z))$$ by the sum of the three homomorphisms in $\{{i^{\prime}}_{j^{\prime}}\}_{j^{\prime}=1}^3$ into $H_j(\partial E(Y,Z);\mathbb{Z})$ where the $j^{\prime}$-th homomorphism is a homomorphism from the $j^{\prime}$-th summand. ${i^{\prime}}_{j^{\prime}}$ is also defined as a suitable homomorphism from a suitable subgroup of the original group of the domain into the group of the target.

Related to this, we prove some propositions one by one. By applying some of the obtained statements and arguments, we prove Main Theorem \ref{mthm:1}.

First, we investigate the homomorphisms and $H_j(\partial E(Y,Z);\mathbb{Z})$ for $j>0$ under the conditions that $\dim X-\dim Y \geq 2$, that $\dim Y \geq 2$ and that $Z$ is a one-point set.

\begin{Prop}
	In the present situation, let $Z$ be a one-point set and assume also that $\dim X-\dim Y \geq 2$ and $\dim Y \geq 2$. We have the following facts.
	\label{prop:5}
	\begin{enumerate}
		\item \label{prop:5.0}
		$CE(Z)$ is connected.
		\item
		\label{prop:5.1}
		Hereafter $i_{j^{\prime},l,j}$ and ${i^{\prime}}_{j^{\prime},j}$ denote homomorphisms from the groups whose degrees are $j$ into ones whose degrees are $j$.   
		\begin{enumerate}
					\item \label{prop:5.1.1} $i_{1,1,j}$ an isomorphism for any $j$. $i_{1,2,j}$ is the zero homomorphism for $j \neq 0$. For $j=0$, $i_{1,2,j}$ is an isomorphism between groups isomorphic to $\mathbb{Z}$. $i_{1,3,j}$ is the zero homomorphism for any $j$.
		\item \label{prop:5.1.2}
		$i_{2,1,j}$ is the zero homomorphism for any $j$. $i_{2,2,j}$ is the zero homomorphism for any $j$. $i_{2,3,j}$ is the zero homomorphism from the trivial group for $j<\dim X-\dim Y-1$ and $\dim X-\dim Y-1<j < \dim X-2$, an isomorphism between groups isomorphic to $\mathbb{Z}$ for $j=\dim X-\dim Y-1$, the zero homomorphism from a group isomorphic to $\mathbb{Z}$ for $j=\dim X-2$.
		\item \label{prop:5.1.3} ${i^{\prime}}_{1,j}$ is the zero homomorphism for any $j$.
		The sum of the three homomorphisms in
		$\{{i^{\prime}}_{j^{\prime},j}\}_{j^{\prime}=1}^3$ is an epimorphism for $1 \leq j \leq \dim X-2$. 
		\end{enumerate}
	\item 
	\label{prop:5.2}
	 $H_j(\partial E(Y,Z);\mathbb{Z})$ is isomorphic to the direct sum of $H_{j}(CE(Z);\mathbb{Z})$ and $H_{j-(\dim X-\dim Y-1)}(CE(Z);\mathbb{Z})$ for $\dim X-\dim Y-1<j \leq \dim X-2$. 
	\item
	\label{prop:5.3} $H_j(\partial E(Y,Z);\mathbb{Z})$ is isomorphic to the direct sum of $H_{j}(CE(Z);\mathbb{Z})$ and the trivial group for $1 \leq j \leq \dim X-\dim Y-1<\dim X-2$. 

	\end{enumerate}
\end{Prop}
\begin{proof}
	First, $CE(Z)$ is connected by our definitions and we have (\ref{prop:5.0}).

We investigate the direct sum	
$i_{1,1,j} \oplus i_{1.2,j} \oplus i_{1,3,j}$. The first summand is an isomorphism for every $j$ since it is regarded as an identity map under a suitable identification by our definition of the group $H_{b,j,\dim X-\dim Y-1}(\partial N(Z))$. The second summand is the zero homomorphism for any $j \neq 0$ by the fact that $\partial N(Z)$ is a sphere of dimension $\dim Y-1>1$ bounding $CE(Z)$ and the definitions of the groups $H_{b,j,\dim X-\dim Y-1}(\partial N(Z))$ and $H_{b,j,\dim X-\dim Y-1}(CE(Z))$. 
This is also an isomorphism between groups isomorphic to $\mathbb{Z}$ for $j=0$ by the same reason with (\ref{prop:5.0}). 
The third summand is the zero homomorphism by the definitions of $H_{b,j,\dim X-\dim Y-1}(\partial N(Z))$ and $H_{f,j,\dim X-\dim Y-1}(CE(Z))$. We have (\ref{prop:5.1.1}).

$i_{2,1,j} \oplus i_{2,2.j} \oplus i_{2,3,j}$ is the direct sum of a zero homomorphism, another zero homomorphism and a homomorphism. We know this for the first two homomorphisms by our definitions of $H_{f,j,\dim X-\dim Y-1}(\partial N(Z))$ and $H_{b,j,\dim X-\dim Y-1}(CE(Z))$. By our definitions of $H_{f,j,\dim X-\dim Y-1}(\partial N(Z))$ and $H_{f,j,\dim X-\dim Y-1}(CE(Z))$, the assumption that $\partial N(X)$ is a sphere of dimension $\dim Y-1>1$ and (\ref{prop:5.0}) for example, the third homomorphism $i_{2,3,j}$ is the zero homomorphism from the trivial group for $j<\dim X-\dim Y-1$
and $\dim X-\dim Y-1<j<\dim X-2$, an isomorphism between groups isomorphic to $\mathbb{Z}$ for $j=\dim X-\dim Y-1$, and the zero homomorphism from a group isomorphic to $\mathbb{Z}$ for $j=\dim X-2$. This is also due to an argument similar to one for $i_{1,2,j}$. We have (\ref{prop:5.1.2}).

By the fact that $i_{1,1,j}$ is an isomorphism, ${i^{\prime}}_{1,j}$ is the zero homomorphism for every $j$.
 For example, the facts that $i_{1,1,j}$ is an isomorphism for every $j$, that $i_{1,3,j}$ and $i_{2,1,j}$ are the zero homomorphisms for every $j$ and that for $1 \leq j <\dim X-2$, $i_{2,3,j}$ is a monomorphism, and the exact sequence imply that the sum of the three homomorphisms in
$\{{i^{\prime}}_{j^{\prime},j}\}_{j^{\prime}=1}^3$ is an epimorphism for $1 \leq j \leq \dim X-2$. We have (\ref{prop:5.1.3}). This completes the proof of (\ref{prop:5.1}).

 (\ref{prop:5.1}) and this exact sequence also imply that for $\dim X-\dim Y-1<j \leq \dim X-2$, $H_j(\partial E(Y,Z);\mathbb{Z})$ is isomorphic to the direct sum of $H_{j}(CE(Z)
 ;\mathbb{Z})$, identified with the image of $i^{\prime}_{2,j}$, and $H_{j-(\dim X-\dim Y-1)}(CE(Z);\mathbb{Z})$, identified with the image of $i^{\prime}_{3,j}$. We have (\ref{prop:5.2}).

 (\ref{prop:5.1}) and this exact sequence also imply that for $1 \leq j \leq \dim X-\dim Y-1 <\dim X-2$, $H_j(\partial E(Y,Z);\mathbb{Z})$ is isomorphic to the direct sum of $H_{j}(CE(Z);\mathbb{Z})$, identified with the image of $i^{\prime}_{2,j}$, and another group $G_j$. We can see that $G_j$ is isomorphic to the quotient group $H_{f,j,\dim X-\dim Y-1}(CE(Z))/i_{2,3,j}(H_{f,j,\dim X-\dim Y-1}(\partial N(Z)))$ of $H_{f,j,\dim X-\dim Y-1}(CE(Z))$. This is always the trivial group for $1 \leq j \leq \dim X-\dim Y-1 <\dim X-2$ by topological properties of $i_{2,3,j}$. We have (\ref{prop:5.3}). 

This completes the proof.
\end{proof}
\begin{Prop}
\label{prop:6}
Let $X$ be a copy of a unit disk. Let $Y$ be a closed submanifold of $X$ with no boundary satisfying $\dim Y>0$ and embedded smoothly in ${\rm Int}\ X$ so that its normal bundle is trivial and that the dimension of the normal bundle is greater than $1${\rm :} it is assumed that $\dim X-\dim Y \geq 2$ is satisfied.
Let $Z$ be a one-point set of $Y$.

 In this situation, $H_j(X\overline{(Y,Z)};\mathbb{Z})$ is isomorphic to $H_{j-(\dim X-\dim Y-1)}(CE(Z);\mathbb{Z})$ for $\dim X-\dim Y-1<j<\dim X-1$, the trivial group for $1 \leq j \leq \dim X-\dim Y-1$ and $\mathbb{Z}$ for $j=\dim X-1$. Furthermore, $X\overline{(Y,Z)}$ is simply-connected.
\end{Prop}
\begin{proof}
We have a Mayer-Vietoris exact sequence \\
 \\
$\rightarrow H_j(\partial E(Y,Z);\mathbb{Z}) \rightarrow H_j(E(Y,Z);\mathbb{Z}) \oplus H_j(X\overline{(Y,Z)};\mathbb{Z}) \rightarrow H_j(X;\mathbb{Z}) \rightarrow$ \\
 \\
and since $X$ is a disk for $j \geq 1$ the homomorphism from $H_j(\partial E(Y,Z);\mathbb{Z})$ into $H_j(E(Y,Z);\mathbb{Z}) \oplus H_j(X\overline{(Y,Z)};\mathbb{Z})$ is an isomorphism.

Here we review Proposition \ref{prop:5}, presented just before. $H_j(\partial E(Y,Z);\mathbb{Z})$ is shown to be isomorphic to the direct sum of $H_{j}(CE(Z);\mathbb{Z})$ and $H_{j-(\dim X-\dim Y-1)}(CE(Z);\mathbb{Z})$ for $\dim X-\dim Y-1 < j \leq \dim X-2$ in Proposition \ref{prop:5} (\ref{prop:5.2}).
It is also shown to be isomorphic to the direct sum of $H_{j}(CE(Z);\mathbb{Z})$ and the trivial group for $1 \leq j \leq \dim X-\dim Y-1$ in Proposition \ref{prop:5} (\ref{prop:5.3}).

The summand
$H_{j-(\dim X-\dim Y-1)}(CE(Z);\mathbb{Z})$ in the case $\dim X-\dim Y-1 < j \leq \dim X-2$ is regarded as the image of ${i^{\prime}}_3:H_{f,j,\dim X-\dim Y-1}(CE(Z)) \rightarrow H_j(\partial E(Y,Z);\mathbb{Z})$. It is mapped onto
the summand $H_j(E(Y,Z);\mathbb{Z})$ of $H_j(E(Y,Z);\mathbb{Z}) \oplus H_j(X\overline{(Y,Z)};\mathbb{Z})$ by the zero homomorphism and into $H_j(X\overline{(Y,Z)};\mathbb{Z})$ as a monomorphism in this Mayer-Vietoris sequence. This is due to the fact that the homomorphism from $H_j(\partial E(Y,Z);\mathbb{Z})$ into $H_j(E(Y,Z);\mathbb{Z}) \oplus H_j(X\overline{(Y,Z)};\mathbb{Z})$ is an isomorphism for $j \geq 1$ and our definition of $H_{f,j,\dim X-\dim Y-1}(CE(Z);\mathbb{Z})$ with the structures of our trivial bundles.

The first summands of both cases are $H_{j}(CE(Z);\mathbb{Z})$.
Each of them is regarded as the image of ${i^{\prime}}_{2,j}:H_{b,j,\dim X-\dim Y-1}(CE(Z)) \rightarrow H_j(\partial E(Y,Z);\mathbb{Z})$ before.
 In the Mayer-Vietoris sequence here, it is mapped onto the summand $H_j(E(Y,Z);\mathbb{Z})$ of $H_j(E(Y,Z);\mathbb{Z}) \oplus H_j(X\overline{(Y,Z)};\mathbb{Z})$ isomorphically due to our definition of $H_{b,j,\dim X-\dim Y-1}(CE(Z);\mathbb{Z})$ with the structures of our trivial bundles. It is also mapped into the summand $H_j(X\overline{(Y,Z)};\mathbb{Z})$ in this Mayer-Vietoris sequence by some homomorphism. 

In our Mayer-Vietoris sequence here, the homomorphism from $H_j(\partial E(Y,Z);\mathbb{Z})$ into $H_j(E(Y,Z);\mathbb{Z}) \oplus H_j(X\overline{(Y,Z)};\mathbb{Z})$ is an isomorphism for $j \geq 1$. From our arguments here, we can see that
$H_{j}(X\overline{(Y,Z)};\mathbb{Z})$ is isomorphic to the summand $H_{j-(\dim X-\dim Y-1)}(CE(Z);\mathbb{Z})$ for $\dim X-\dim Y-1<j<\dim X-1$ and the trivial group for $1 \leq j \leq \dim X-\dim Y-1 \geq 1$. 

$\partial X\overline{(Y,Z)}$ consists of exactly two connected components and
 $H_{1}(X\overline{(Y,Z)};\mathbb{Z})$ is the trivial group. By a kind of exercises on the homology exact sequence for $(X\overline{(Y,Z)},\partial X\overline{(Y,Z)})$,$H_{1}(X\overline{(Y,Z)},\partial E(Y,Z);\mathbb{Z})$ is free and of rank $1$ and $H^1(X\overline{(Y,Z)},\partial E(Y,Z);\mathbb{Z})$ is also free and of rank $1$. By Poincar\'e duality theorem for $X\overline{(Y,Z)}$, $H_{\dim X-1}(X\overline{(Y,Z)};\mathbb{Z})$ is free and its rank is $1$.

$\partial E(Y,Z)$ and $E(Y,Z)$ are connected by the assumption on the dimensions. 

$\partial E(Y,Z)$ is obtained by gluing $NE(Y,Z)$ and $SE(N,Z)$ along the boundaries $SNE(Y,Z)=\partial NE(Y,Z)$ and $\partial SE(N,Z)$.
$NE(Y,Z)$ is the total space of the trivial linear bundle over $\partial N(Z)$, diffeomorphic to the ($\dim Y-1$)-dimensional unit sphere $S^{\dim Y-1}$.
 $SE(Y,Z)$ is the total space of the trivial linear bundle over $CE(Z)$, diffeomorphic to a manifold obtained by removing the interior of a copy of the ($\dim Y$)-dimensional unit disk $D^{\dim Y}$ smoothly embedded in $Y$, whose fiber is the ($\dim X-\dim Y-1$)-dimensional unit sphere $S^{\dim X-\dim Y-1}$. 
$CE(Z)$ is also connected from Proposition \ref{prop:5} (\ref{prop:5.0}).
Due to the structures of these trivial linear bundles, the natural identifications there, and Seifert van-Kampen theorem, ${\pi}_1(\partial E(Y,Z))$ is isomorphic to ${\pi}_1(CE(Z))$.

${\pi}_1(\partial E(Y,Z))$ is mapped onto ${\pi}_1(E(Y,Z))$ by the isomorphism induced from the inclusion canonically. This is due to a nice structure of $E(Y,Z)$, having the structure of a trivial linear bundle over $CE(Z)$ whose fiber is the unit sphere $D^{\dim X-\dim Y}$. In addition, here $E(Y,Z)$ collapses to $CE(Z)$, which can be regarded as the image of some section of the bundle $E(Y,Z)$ over $CE(Z)$.

$X$ is a copy of the ($\dim X$)-dimensional unit disk $D^{\dim X}$ and $E(Y) \supset E(Y,Z) \supset CE(Z)$ is a closed tubular neighborhood of $Y$. Here $CE(Z)$ is regarded as the image of some section of the trivial bundle $E(Y,Z)$ over $CE(Z)$ as discussed just before. This bundle is the restriction of the trivial bundle $E(Y)$ over $Y$ to $CE(Z)$.

From this argument, we can see that
${\pi}_1(\partial E(Y,Z))$ is mapped onto  ${\pi}_1(E{(Y,Z)})$ by an isomorphism. This is also induced by the homomorphism induced from the inclusion canonically.

 $X$ is simply-connected.
 $X$ is regarded as a disk obtained by gluing two manifolds $E(Y,Z)$ and $X\overline{(Y,Z)}$ along the boundary $\partial E(Y,Z)$ and a connected component of the boundary $\partial X\overline{(Y,Z)}$. 
 
   By virtue of Seifert van-Kampen theorem. ${\pi}_1(X\overline{(Y,Z)};\mathbb{Z})$ is shown to be the trivial group.
   Thus $X\overline{(Y,Z)}$ is simply-connected. 
 
This completes the proof.
\end{proof}
\begin{proof}[A proof of Main Theorem \ref{mthm:1}]
By virtue of \cite{wall}, every $3$-dimensional closed, connected and orientable manifold can be embedded smoothly in ${\mathbb{R}}^5$. In Propositions \ref{prop:5} and \ref{prop:6}, let $\dim X=5$.

 For any finite cyclic group $G^{\prime}$, we can choose $Y \subset X$ as a $3$-dimensional closed, connected and orientable manifold and a one-point set $Z \subset Y$ enjoying the property
 $H_1(Y;\mathbb{Z}) \cong H_1(CE(Z);\mathbb{Z}) \cong G^{\prime}$ and $H_2(Y;\mathbb{Z}) \cong H_2(CE(Z);\mathbb{Z}) \cong \{0\}$.
For example, as $Y$, we can choose the $3$-dimensional unit sphere $S^3$ or a so-called {\it Lens space}. For example, ${\pi}_1(Y) \cong H_1(Y;\mathbb{Z}) \cong G^{\prime}$ holds for such a manifold.

For any finitely generated commutative group $G$, we can choose $Y$ as a manifold represented as a connected sum of manifolds diffeomorphic to some manifolds in these manifolds or $S^2 \times S^1$ for example.  

  Note that $CE(Z)$ is a manifold obtained by removing the interior of a smoothly embedded copy of the $3$-dimensional unit disk $D^3$ from $Y$. We also have
 $\dim X-\dim Y-1=5-3-1=1$ and $\dim X-1=5-1=4$.
 
   We consider a boundary connected sum of $X \overline{(Y,Z)}$ with $G$ being an arbitrary finite commutative group and finitely many copies of $5$-dimensional manifolds diffeomorphic to $S^2 \times D^3$, $S^3 \times D^2$ or $S^4 \times D^1$ to have a desired manifold $X$. We have our desired result by applying Propositions \ref{prop:5} and \ref{prop:6}.
Note that in the case $G_2=\{0\}$, we must use a manifold obtained by a $Y/Z$-remove to a $5$-dimensional standard sphere instead of a copy of the $5$-dimensional unit disk $D^5$. Note also that in the case where the manifold to which we do a $Y/Z$-remove is a $5$-dimensional standard sphere, we can argue similarly except for the $4$-th homology groups.

This completes the proof.
\end{proof}
We can show a similar theorem where the manifold $X$ is not supposed to be simply-connected. Proposition \ref{prop:4} implies that in the case where $H_1(X;\mathbb{Z})$ is the trivial group, $H_j(X;\mathbb{Z})$ must be free for $j=3,4$.
From Propositions \ref{prop:2} and \ref{prop:3} we immediately have Main Theorem \ref{mthm:2} as a corollary. This result can be regarded as an extension of a result (Theorem 8) in \cite{kitazawa4}, where the group $G$ is assumed to be represented as the direct sum of finitely many copies of $\mathbb{Z}$ or $\mathbb{Z}/2\mathbb{Z}$, which is a finite group of order $2$.
We can also improve Main Theorems 3 and 4 of \cite{kitazawa4} by applying the fact that there exists a $5$-dimensional closed and simply-connected manifold whose second integral homology group is isomorphic to a group represented as the direct sum of two copies of an arbitrary finite commutative group and which we can smoothly immerse and embed into ${\mathbb{R}}^6$. See also \cite{barden, nishioka} for such manifolds for example. Rigorous arguments are left to readers. 
\begin{Rem}
\label{rem:1}
The {\it Reeb space} of a (continuous) map between topological spaces is defined as the space of all connected components of preimages for the map. \cite{reeb} is one of pioneering papers on Reeb spaces.
For a fold map, it is regarded as a polyhedron whose dimension is equal to that of the target and it inherits (co)homological information of the manifold in various situations. $W_f$ is seen as the Reeb space of a special generic $f$.
If we see the compact manifold above as a Reeb space, then this operation is regarded as an advanced version of a kind of surgery operations first defined in \cite{kitazawa} where $Z$ is empty under very specific and explicit scenes. Note also that operations in \cite{kitazawa} were introduced motivated by \cite{kobayashi,kobayashi2,kobayashisaeki}, which construct explicit fold maps satisfying some conditions systematically and that they are extensions of operations in \cite{kobayashi, kobayashi2}.

\end{Rem}
Related to Remark \ref{rem:1} and all our present study, constructing special generic maps and fold maps is natural, and fundamental. It is also difficult, even for manifolds of very explicit classes consisting of manifolds which are elementary in some meaningful senses. We will discover more on construction of explicit special generic maps and fold maps. 
\section{Acknowledgement, grants and data.}
The author is a member of JSPS KAKENHI Grant Number JP17H06128 "Innovative research of geometric topology and singularities of differentiable mappings" (Principal Investigator: Osamu Saeki). This work is also supported by this project. The present paper contains all associated data essentially supporting this study.


\begin{thebibliography}{30}
\bibitem{barden} D. Barden, \textsl{Simply Connected Five-Manifolds}, Ann. of Math. (3) 82 (1965), 365--385.
\bibitem{calabi} E. Calabi, Quasi-surjective mappings and a generation of Morse theory, Proc. U.S.-Japan Seminar in Differential Geometry, Kyoto, 1965, pp. 13--16.
\bibitem{eellskuiper} J. J. Eells and N. H. Kuiper, \textsl{An invariant for certain smooth manifolds}, Ann. Mat. Pura Appl. 60 (1962), 93--110.
\bibitem{eliashberg} Y. Eliashberg, \textsl{On singularities of folding type}, Math. USSR Izv. 4 (1970). 1119--1134.
\bibitem{eliashberg2} Y. Eliashberg, \textsl{Surgery of singularities of smooth mappings}, Math. USSR Izv. 6 (1972). 1302--1326.
\bibitem{golubitskyguillemin} M. Golubitsky and V. Guillemin, \textsl{Stable mappings and their singularities}, Graduate Texts in Mathematics (14), Springer-Verlag (1974).
\bibitem{hatcher} A. Hatcher, \textsl{Algebraic Topology}, A modern, geometrically flavored introduction to algebraic topology, Cambridge: Cambridge University Press (2002).
\bibitem{kitazawa} N. Kitazawa, \textsl{Constructing fold maps by surgery operations and homological information of their Reeb spaces}, submitted to a refereed journal, arxiv:1508.05630 (the title has been changed).
\bibitem{kitazawa2} N. Kitazawa, \textsl{Notes on fold maps obtained by surgery operations and algebraic information of their Reeb spaces}, arxiv:1811.04080.
\bibitem{kitazawa3} N. Kitazawa \textsl{Closed manifolds admitting no special generic maps whose codimensions are negative and their cohomology rings}, submitted to a refereed journal, arxiv:2008.04226. 
\bibitem{kitazawa4} N. Kitazawa, \textsl{Notes on special generic maps into Euclidean spaces whose dimensions are greater than $4$}, a
revised version is submitted based on positive commments by editors and referees after the first submission to a refereed journal, 
arXiv:2010.10078v9.
\bibitem{kobayashi} M. Kobayashi, \textsl{Stable mappings with trivial monodromies and application to inactive log-transformations}, RIMS Kokyuroku. 815 (1992), 47--53.
\bibitem{kobayashi2} M. Kobayashi, \textsl{Bubbling surgery on a smooth map}, preprint.
\bibitem{kobayashisaeki} M. Kobayashi and O. Saeki, \textsl{Simplifying stable mappings into the plane from a global viewpoint}, Trans. Amer. Math. Soc. 348 (1996), 2607--2636.  
\bibitem{milnor} J. Milnor, \textsl{On manifolds homeomorphic to the $7$-sphere}, Ann. of Math. (2) 64 (1956), 399--405.
\bibitem{nishioka} M. Nishioka, \textsl{Special generic maps of $5$-dimensional manifolds}, Revue Roumaine de Math`{e}matiques Pures et Appliqu\`{e}es, Volume LX No.4 (2015), 507--517.
\bibitem{reeb} G. Reeb, \textsl{Sur les points singuliers d\`{u}ne forme de Pfaff completement integrable ou d’une fonction numerique}, -C. R. A. S. Paris 222 (1946), 847--849. 
\bibitem{saeki} O. Saeki, \textsl{Notes on the topology of folds}, J. Math. Soc. Japan Volume 44, Number 3 (1992),551--566.
\bibitem{saeki2} O. Saeki, \textsl{Topology of special generic maps of manifolds into Euclidean spaces}, Topology Appl. 49 (1993), 265--293.
\bibitem{saeki3} O. Saeki, \textsl{Topology of special generic maps into $\mathbb{R}^3$}, Workshop on Real and Complex Singularities (Sao Carlos, 1992), Mat. Contemp. 5 (1993), 161--186.
\bibitem{saekisakuma} O. Saeki and K. Sakuma, \textsl{On special generic maps into ${\mathbb{R}}^3$}, Pacific J. Math. 184 (1998), 175--193.
\bibitem{saekisakuma2} O. Saeki and K. Sakuma, \textsl{Special generic maps of $4$-manifolds and compact complex analytic surfaces}, Math. Ann. 313, 617--633, 1999.
\bibitem{saekisuzuoka} O. Saeki and K. Suzuoka, \textsl{Generic smooth maps with sphere fibers} J. Math. Soc. Japan Volume 57, Number 3 (2005), 881--902.
\bibitem{thom} R. Thom, \textsl{Les singularites des applications differentiables}, Ann. Inst. Fourier (Grenoble) 6 (1955-56), 43--87.
\bibitem{wall} C. T. C. Wall, \textsl{All $3$-manifolds embed in $5$-space}, Bull. Amer. Math. Soc. 71 (1965), 564--567.
\bibitem{whitney} H. Whitney, \textsl{On singularities of mappings of Euclidean spaces: I, mappings of the plane into the plane}, Ann. of Math. 62 (1955), 374--410.
\bibitem{wrazidlo} D. J. Wrazidlo, \textsl{Standard special generic maps of homotopy spheres into Eucidean spaces}, Topology Appl. 234 (2018), 348--358, arxiv:1707.08646.
\end{thebibliography}
\end{document}